\documentclass{amsart}
\usepackage{eufrak, amssymb,amsmath, amscd, mathrsfs, yfonts, bbm, graphics, dsfont}

\newtheorem{claim}{}[section]

\newtheorem{thm}[claim]{Theorem}
\newtheorem{lemma}[claim]{Lemma}
\newtheorem{remark}[claim]{Remark}

\newtheorem{cor}[claim]{Corollary}

\newcommand{\lra}{\longrightarrow}

\newcommand{\gH}{\mathfrak{H}}
\newcommand{\cT}{\mathcal{T}}

\newcommand{\cH}{\mathcal{H}}
\newcommand{\cK}{\mathcal{K}}
\newcommand{\cE}{\mathcal{E}}

\newcommand{\Id}{\textbf{Id}}
\newcommand{\cL}{\mathcal{L}}
\newcommand{\cA}{\mathcal{A}}

\newcommand{\gA}{\mathfrak{A}}
\newcommand{\cD}{\mathcal{D}}
\newcommand{\wb}{\widetilde{b}}
\newcommand{\wB}{\widetilde{B}}
\newcommand{\wA}{\widetilde{A}}
\newcommand{\cJ}{\mathcal{J}}

\title [Addition and multiplication of c-free random variables]{A Fock space model for addition and multiplication of c-free random variables}
\author{Mihai Popa}
\address{Center for Advanced Studies in Mathematics at the Ben Gurion University
of Negev, P.O. B. 653, Be'er Sheva 84105, Israel and
\newline
Institute of Mathematics "Simion Stoilow" of the Romanian Academy, P.O. Box 1-764,
Bucharest, RO-70700, Romania}
\email{popa@math.bgu.ac.il}

\begin{document}

\maketitle

\begin{abstract}
 The paper presents a Fock space model suitable for constructions of c-free algebras. Immediate applications are direct proofs for the properties of the c-free $R$- and $S$-transforms. 
 
  AMS Subject Classification: 46L54; 30H20

 Keywords: free independence, c-free independence, creation/annihilation operators, ${}^cR$- and ${}^cS$-trasforms
\end{abstract}

\section{Introduction}
 Two important tools in Free Probability theory are the $R$- and $S$-transforms, that play similar role to Fourier, respectively Mellin transform. More precisely, besides strong regularity properties, if $X$ and $Y$ are two free non-commutative random variables, then $R_{X+Y}(z)=R_X(z)+R_Y(z)$ and $S_{XY}(z)=S_X(z)\cdot S_Y(z)$ if $X,Y$ have non-zero first order moments.
 
 In literature  there  are two main techniques to prove the additive, respectively multiplicative properties of the $R$- and $S$-transforms. The  proofs given by D.-V. Voiculescu (\cite{voiculescu1}, \cite{voiculescu2}) and U. Haagerup (\cite{haagerup}) based on functional analysis techniques, namely on the properties of the annihilation and creation operators on the full Fock space, while the proofs of R. Speicher and A. Nica (\cite{speichernica}) are based on combinatorial techniques on the lattice of non-crossing partitions (also non-crossing linked partitions appear in the proofs for the multiplicative property of the $S$-transform in \cite{dykema-adv}, \cite{mvp2}). 
 
 In early '90's, M. Bozejko, M. Leinert and R. Speicher introduced the notion of \emph{c-freeness}, which extends the notion of freeness to the framework of and algebra endowed with two $(\phi, \psi)$, rather than one, normalized linear functionals (see Section 2 for the exact definitions). (A more general approach to c-freeness, considering pairs of completely positive maps and conditional expectations have been pursued by F. Boca (\cite{boca}), K. Dykema and E. Blanchard (\cite{dykema-blanchard}), M. Popa, V. Vinnikov (\cite{mvp-stoch}, \cite{mvp-vv}) etc). Addition of c-free random variables is studied in \cite{bls}, where is constructed a c-free version of the $R$-transform, the ${}^cR$-transform, with similar additivity and analytic properties (for $\phi=\psi$, the two transforms coincide); multiplication of c-free random variables was studied in \cite{mvp-jcw}, where is constructed a c-free extension of the $S$-transform. In both cases, the proofs of the key properties (addition for the ${}^cR$- and multiplication for the ${}^cS$-transform) are combinatorial, much like the proofs from \cite{speichernica}, heavily relaying on the properties on non-crossing partitions. The present material gives a new approach to c-free random variables, in the spirit of the construction from \cite{haagerup}. Particularly, we give a more direct proof of the additive  and multiplicative properties of the ${}^cR$- and ${}^cS$-transforms, based on the properties of the creation and annihilation operators on a certain type of Fock space.
 
 Besides the Introduction, the paper is organized in 3 sections. Section 2 presents basic definitions, the construction of the space $\cE(\gH, \cT(\cK))$ and an operator algebras  model for c-free algebras. Section 3 presents the construction of some operators of prescribed ${}^cR$ and ${}^cS$-transforms and Section 4 gives the proof for the additive, respective multiplicative properties of ${}^cR$ and ${}^cS$.
 
 In this paper, rather than the $S$- or ${}^cS$-transforms, we use, to simplify the notations their multiplicative inverses, the so-called $T$- and ${}^cT$-transforms -- i. e. $T_X(z)\cdot S_X(z)=1$ (see \cite{dykema-adv}, \cite{mvp2}), respectively  ${}^cT(z)\cdot{}^cS(z)=1$ (see \cite{mvp-jcw}).

  \section{A construction of c-free algebras}

\S1.
  Suppose $\cA$ is a complex unital algebra endowed and $\psi:\cA\lra\mathbb{C}$ is a linear map such that $\varphi(1)=1$. A family of unital subalgebras $\{\cA_i\}_{i\in I}$ of $\cA$ is said to be \emph{free} (with respect to $\psi$) if 
  \[
  \psi(x_1\cdots x_n)=0
 \]
 whenever $x_j\in\cA_{\epsilon(j)}$ with $\epsilon(k)\neq\epsilon(k+1)$ and $\psi(x_j)=0$ for $1\leq j\leq n$ and $1\leq k<n$.
  
  If $\phi:\cA\lra\mathbb{C}$ is another linear map with $\phi(1)=1$, the family $\{\cA_i\}_{i\in I}$ of unital subalgebras of $\cA$ is said to be \emph{c-free} with respect to ($\phi, \psi$) if $\{A_i\}_{i\in I}$ are free with respect to $\psi$ and
  \[
  \phi(x_1\cdots x_n)=\phi(x_1)\cdots \phi(x_n)
  \] 
   whenever $x_j\in\cA_{\epsilon(j)}$ with $\epsilon(k)\neq\epsilon(k+1)$ and $\psi(x_j)=0$ for $1\leq j\leq n$ and $1\leq k<n$.

  Take $X$  an element from $\cA$ and let $m_X(z)=\sum_{k=1}^\infty\psi(X^k)$ denote the moment generating series of $X$ with respct to $\psi$. As formal power series,   the transforms $R_X(z)$ and, if $\psi(X)\neq0$, $T_X(z)$ are defined by the equations
 \begin{eqnarray}
 m_X(z)&=&R_X(z[1+m_X(z)])\label{rdef}\\
 \frac{1}{z}m_X(z)&=&\left[T_X(m_X(z))\right]\cdot(1+m_X(z))\nonumber.
 \end{eqnarray}
We warn the reader that the version of the $R$-trasform that is used in the present material differs from the original definition of D.-V. Voiculescu (that we will call here $\mathcal{R}$) by a multiplication with the variable $z$: $R_X(z)=z\cdot\mathcal{R}_X(z)$. As also seen in \cite{mvp-jcw}, \cite{speichernica}, this shift of coefficients is simplifying the notations in several recurrence relations from \S2.

For $\cH$ a complex Hilbert space, we define $\cT^0(\cH)=\cH\oplus(\cH\otimes\cH)\oplus(\cH\otimes\cH\otimes\cH)\oplus\dots$ and $\cT(\cH)=\mathbb{C}\omega\oplus\cT^0(\cH)$, where $\|\omega\|=1$. For $e\in\cH$ a nonzero vector, the creation operator over $e$, $a^\ast_e\in\cL(\cT(\cH))$, is given by the relations
\begin{align*}
&{a}^\ast_e\omega=e\\
&{a}^\ast_e v_1\otimes v_2\otimes\cdots v_k=e\otimes v_1\otimes v_2\otimes\cdots v_k,\ \text{for}\ v_1,\dots, v_k\in\cH
\end{align*}
while the annihilation operator over $e$, $a_e\in\cL(\cT(\cH))$, is given by
\begin{align*}
&{a}_{e}\omega=0\\
&{a}_{e} v_1\otimes v_2\otimes\cdots v_k=
\langle v_1,\xi\rangle\cdot v_2\otimes\cdots\otimes v_k
\end{align*}

We remind the following result (see \cite{dnv} for (1), \cite{haagerup}, Theorem 2.2 and Theorem 2.3, for (2) and  (3)):
\begin{thm}\label{freeprob}
Let $\cH$ be a complex Hilbert space, $e_1$ and $e_2$ be two orthogonal vectors from $\cH$, and $f_1, f_2$ be polynomials with complex coefficients. For $e\in\cH\setminus\{0\}$ we will denote by $\cA(e)$ the algebra generated by the creation and annihilation operators over $e$.
\begin{enumerate}
\item[(1)]\label{item1} The algebras $\cA(e_1)$ and $\cA(e_2)$ are free with respect to the vacuum state $T\mapsto\langle T\omega, \omega\rangle$.
    \item[(2)] If $\alpha_i=a^\ast_{e_i}+f(a_{e_i})$, $(i=1,2)$, then $R_{\alpha_i}(z)=z\cdot f_i(z)$ and
    $R_{\alpha_1+\alpha_2}(z)=z\cdot f_1(z)+z\cdot f_2(z)$.
    \item[(3)] If $f_i(0)\neq0$, and $\beta_i=[\Id_{\cT(\cH)}+a^\ast_{e_i}]f(a_{e_i})$, $(i=1,2)$, then $T_{\beta_i}(z)=f_i(z)$ and
    $T_{\beta_1\beta_2}(z)=f_1(z)\cdot f_2(z)$.
\end{enumerate}

\end{thm}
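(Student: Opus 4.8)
The plan is to treat the three assertions separately, deriving (2) and (3) from explicit resolvent and moment computations and obtaining the additive and multiplicative laws from the freeness established in (1).

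For (1) I would first record the relation $a_e a^\ast_e=\|e\|^2\Id$ (valid on all of $\cT(\cH)$), which shows that every element of $\cA(e)$ is a linear combination of normal-ordered monomials $(a^\ast_e)^m(a_e)^n$, and that the vacuum state $\tau(T)=\langle T\omega,\omega\rangle$ satisfies $\tau\bigl((a^\ast_e)^m(a_e)^n\bigr)=\delta_{m0}\delta_{n0}$; hence a centered element of $\cA(e)$ carries no identity component. To prove freeness I would take a centered alternating word $x_1\cdots x_k$ with $x_j\in\cA(e_{\epsilon(j)})$, $\epsilon(j)\neq\epsilon(j+1)$, and show by downward induction that $x_j\cdots x_k\omega$ is a finite combination of tensors, each beginning with the factor $e_{\epsilon(j)}$ and of length at least one. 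The base case uses $(a_e)^n\omega=0$ for $n\geq1$, so $x_k\omega=\sum_{m\geq1}c_m\,e_{\epsilon(k)}^{\otimes m}$. The inductive step is the crux: since the vector produced so far begins with $e_{\epsilon(j+1)}$ and $e_{\epsilon(j)}\perp e_{\epsilon(j+1)}$, every annihilation $a_{e_{\epsilon(j)}}$ kills it, so only the creation part of $x_j$ survives and the new leading factor is $e_{\epsilon(j)}$. In particular $x_1\cdots x_k\omega\perp\omega$, so $\tau(x_1\cdots x_k)=0$.

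For (2) I would reduce to a single shift. Taking the $e_i$ to be unit vectors (as the normalization $R_{\alpha_i}(z)=zf_i(z)$ requires), the subspace $V_i=\overline{\mathrm{span}}\{e_i^{\otimes n}:n\geq0\}$ is invariant under $a_{e_i}$ and $a^\ast_{e_i}$, on which they act as the one-sided shift and its adjoint, so the vacuum moments of $\alpha_i=a^\ast_{e_i}+f_i(a_{e_i})$ are computed entirely inside $V_i$. I would solve $(z-\alpha_i)h=\omega$ with the geometric ansatz $h_n=h_0 r^n$: the equations for $n\geq1$ force $r(z-f_i(r))=1$, and the $n=0$ equation then yields $h_0=r$. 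Reading $h_0$ as the Cauchy transform $G$ gives $z=1/G+f_i(G)$, i.e. the classical $\mathcal{R}_{\alpha_i}(w)=f_i(w)$, which in the present normalization $R=z\,\mathcal{R}$ is exactly $R_{\alpha_i}(z)=zf_i(z)$.

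The additive law $R_{\alpha_1+\alpha_2}(z)=zf_1(z)+zf_2(z)$ then follows in one of two ways. The quickest is to combine (1)—which makes $\alpha_1\in\cA(e_1)$ and $\alpha_2\in\cA(e_2)$ free—with additivity of the $\mathcal{R}$-transform on free variables. Closer to the spirit of the Fock-space method, one can instead compute the resolvent of $\alpha_1+\alpha_2$ directly on the invariant subspace spanned by all words in $e_1,e_2$, decomposing $(z-\alpha_1-\alpha_2)^{-1}\omega$ by its leading letter and using $a_{e_1}a^\ast_{e_2}=0$ to decouple the two families; the self-similarity of the resulting system returns $z=1/G+f_1(G)+f_2(G)$. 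Part (3) is entirely analogous but genuinely heavier: for $\beta_i=[\Id+a^\ast_{e_i}]f_i(a_{e_i})$ I would again work in $V_i$, set up the functional equation for the moment series $m_{\beta_i}$ from $(\Id-z\beta_i)g=\omega$, invert it to obtain $\chi_{\beta_i}=m_{\beta_i}^{-1}$, and read off $T_{\beta_i}(z)=f_i(z)$ from the defining relation of $T$; the multiplicativity $T_{\beta_1\beta_2}=f_1f_2$ then comes either from (1) with multiplicativity of the $S$-transform, or from the direct computation. I expect the main obstacle to be precisely this last part: the factor $\Id+a^\ast_{e_i}$ destroys the clean geometric structure available in (2), so the bookkeeping required to invert $m_{\beta_i}$ and, above all, to carry out the product $\beta_1\beta_2$ directly on words in $e_1,e_2$ while matching the $T$-transform convention is where the real work lies.
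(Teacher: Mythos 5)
You should note first that the paper contains no proof of this theorem at all: it is stated as a reminder of known results, with (1) quoted from \cite{dnv} and (2)--(3) from \cite{haagerup} (Theorems 2.2 and 2.3 there). So your attempt can only be compared with those literature proofs, and against that benchmark it follows essentially the same method and is essentially correct. Your argument for (1) is the standard leading-letter induction from \cite{dnv}, complete and correct. Your resolvent computation for the first half of (2) --- the geometric ansatz $h_n=h_0r^n$ forcing $r(z-f_i(r))=1$ and $h_0=r=G$, hence $z=1/G+f_i(G)$ and $\mathcal{R}_{\alpha_i}=f_i$ --- is exactly Haagerup's argument. Your observation that the $e_i$ must be unit vectors is a legitimate catch: as printed the statement says only ``orthogonal vectors'', but $R_{\alpha_i}(z)=zf_i(z)$ fails without the normalization (take $f_i(z)=z$: then $\alpha_i$ is semicircular with variance $\|e_i\|^2$), and the rest of the paper indeed works with unit vectors throughout.

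Two corrections to your own assessment. First, your expectation that (3) is ``genuinely heavier'' because the factor $\Id+a^\ast_{e_i}$ destroys the geometric structure is unfounded: the identical ansatz works. Writing $\beta_i$ on your invariant subspace $V_i$ as $(1+S)f_i(S^\ast)$ and solving $(\Id-z\beta_i)g=\omega$ with $g_n=g_0s^n$, the equations in degree $n\geq1$ force $s=zf_i(s)(1+s)$, and the degree-zero equation gives $g_0=1+s$; since $g=\sum_{n\geq0}z^n\beta_i^n\omega$ gives $\langle g,\omega\rangle=1+m_{\beta_i}(z)$, one reads off $s=m_{\beta_i}(z)$, and then $s=zf_i(s)(1+s)$ is verbatim the paper's defining equation for the $T$-transform, so $T_{\beta_i}=f_i$ with no inversion of $m_{\beta_i}$ needed. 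Second, concerning the two-variable claims: deducing them from part (1) together with the known additivity of $\mathcal{R}$ and multiplicativity of $S$ (applicable here since $\phi(\beta_i)=f_i(0)\neq0$) is logically admissible inside this paper, which itself invokes those classical laws as known facts in its final corollary; but be aware that this inverts the purpose of \cite{haagerup}, where the direct two-variable resolvent computation is the whole point, being precisely what proves those laws. The direct computations you only sketch --- decoupling via $a_{e_1}a^\ast_{e_2}=0$ and self-similarity of the resulting system --- do go through, but they constitute the real content of Haagerup's Theorems 2.2 and 2.3, so as a self-contained proof your text is complete only modulo that sketch.
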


\S2.
Consider now two complex Hilbert spaces $\cK$ and $\gH$ and $\omega$  a distinguished unit vector in $\gH$. Take
$\cT(\cK)=\omega_1\oplus\cT^0(\cK)$, where again $\|\omega_1\|=1$ and
\[
\cE(\gH, \cK)=\gH\oplus(\gH\otimes\cT(\cK)).
\]
Later, in Sections 2 and 4, we will consider $\cE(\gH, \cK)$ for a particular $\gH$; when there is no possibility of confusion, to simplify the writting, we will use $\cE$ for $\cE(\gH, \cK)$. Put $\gH^0=\gH\ominus\mathbb{C}\omega$, $\Omega=\omega\otimes\omega_1$, $\cE^0=\cE\ominus\mathbb{C}\Omega$. We define the following embedding $\pi:\cL(\cK)\lra\cL(\cT(\cH))$:
\[
\pi(a)=a\oplus a\otimes(\Id_{\gH\otimes\cT^0(\cK)}\oplus 0_{\mathbb{C}\Omega}).
\]
 Note that $\pi(\cL(\gH))$ has unit $\pi(\Id_{\gH})=\Id_{\cE_0}\neq\Id_{\cE}$.

 For a nonzero vector $\eta\in\cK$ we define the operators $A^\ast_\eta$ and $\{A_{\eta,n}\}_{n\geq0}$ from $\cL(\cE)$ as follows:
 \begin{eqnarray*}
 A^\ast_\eta\zeta&=&0,\ \text{if}\ \zeta\in\gH\oplus\gH^0\otimes\cT(\cK)\\
 A^\ast_\eta\omega\otimes\omega_1&=&\omega\otimes\eta\\
 A^\ast_\eta\omega\otimes\zeta&=&\omega\otimes(\eta\otimes\zeta)\ \text{for all}\ \zeta\in\cT^0(\cK).
 \end{eqnarray*}
 we put $A_{\eta,0}=\Id_{\mathbb{C}\Omega}$ and, for $n\leq1$, we define $A_{\eta,n}$ via
   \begin{align*}
   &A_{\eta,n}\omega\otimes(\eta^{\otimes n})=\omega\otimes\omega_1\\
   &A_{\eta,n}\zeta=0,\ \text{if}\ \zeta\notin\mathbb{C}\omega_1\otimes(\eta^{\otimes n}),\ \text{where $\eta^{\otimes n}=\underbrace{\eta\otimes\cdots\otimes\eta}_{n\ \text{times}}$.}
   \end{align*}

 We will use the notation  $\cD(\eta)$ for the algebra generated by $A^\ast_\eta$ and $\{A_{\eta,n}\}_{n\geq0}$. If $\cA_1$ and $\cA_2$ are two subalgebras of $\cL(\cE)$, then the notation $\cA_1\vee\cA_2$ will stand for the algebra generated by them in $\cL(\cE)$.

\begin{remark}\label{remark1}

 Fix $\eta, \eta_0\in\cK$  unit vectors. From the definitions of $\pi, A^\ast_\eta, A_{\eta,n}$, trivial verifications  give that  that
 \begin{equation*}
 A_{\eta,n}(A^\ast_\eta)^p=
 \left\{
 \begin{array}{ll}
 A_{\eta,n-p}&\text{if}\ n\geq p\\
 0&\text{if}\ n<p\\
\end{array}
 \right.
 \end{equation*}
 If $x\in\pi(\cL(\gH))$ and $n\geq0$, then $xA_{\eta,n}=0$. Also, if $m,n>0$, then
 \begin{align*}
 &A_{\eta,n}A_{\eta_0,m}=0\\
 &\Id_{\mathbb{C}\Omega}A_{\eta,n}=A_{\eta,n};\ \ A_{\eta,n}\Id_{\mathbb{C}\Omega}=0\\
 &A_{\eta,n}\Id_{\cE_0}=0;\ \ \Id_{\cE_0}A^\ast_\eta=A^\ast_\eta\Id_{\cE_0}=A^\ast_\eta.
 \end{align*}

 \end{remark}

\begin{remark}\label{remark2}
For $\eta_1, \eta_2\in\cK$, we have that
\[
\text{Range}\Big((A^\ast_{\eta_1})^pA^\ast_{\eta_2}\Big)=\text{Span}\{\omega\otimes\eta_2\otimes\eta_1^{\otimes p}, \omega\otimes\zeta\otimes\eta_2\otimes\eta_1^{\otimes p}:\ \zeta\in\cT^0(\cK)\},
\]
 therefore, if $\eta_1\perp\eta_2$, then
 \[
 A_{\eta_1,n}(A^\ast_{\eta_1})^pA^\ast_{\eta_2}=0.
 \]
\end{remark}
  On $\cL(\cE)$ we consider the functionals $\phi(\cdot)=\langle \cdot \Omega, \Omega\rangle$ and $\psi(\cdot)=\langle \cdot \Omega_1, \Omega_1\rangle$.

\begin{lemma}\label{lemma1}
 Suppose $\eta_1, \eta_2\in\cK$ and $x\in\pi(\cL(\gH))$. Then
 \begin{eqnarray*}
 A^\ast_{\eta_1}xA^\ast_{\eta_2}&=&A^\ast_{\eta_1}\psi(x)A^\ast_{\eta_2}\\
 A_{\eta_1,n}xA^\ast_{\eta_2}&=&A_{\eta_1,n}\psi(x)A^\ast_{\eta_2}
 \end{eqnarray*}
\end{lemma}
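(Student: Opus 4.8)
The plan is to verify both operator identities by evaluating each side on a spanning set of $\cE$ and exploiting that the two outer operators $A^\ast_{\eta_1}$ and $A_{\eta_1,n}$ annihilate everything in $\gH^0\otimes\cT(\cK)$. First I would reduce the relevant domain. Since every term of each identity carries $A^\ast_{\eta_2}$ on the right, and $A^\ast_{\eta_2}$ kills $\gH\oplus(\gH^0\otimes\cT(\cK))$, both sides vanish on that subspace; as $\cE$ is spanned by it together with the vectors $\omega\otimes\zeta$ ($\zeta\in\cT(\cK)$), it suffices to test the two equalities on such $\omega\otimes\zeta$. For these, $A^\ast_{\eta_2}(\omega\otimes\zeta)=\omega\otimes\zeta'$, where $\zeta'=\eta_2$ if $\zeta=\omega_1$ and $\zeta'=\eta_2\otimes\zeta$ if $\zeta\in\cT^0(\cK)$; in every case $\zeta'\in\cT^0(\cK)$.

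Next I would compute the action of $x=\pi(a)$ on $\omega\otimes\zeta'$. Because $\zeta'\in\cT^0(\cK)$, the vector $\omega\otimes\zeta'$ lies in $\gH\otimes\cT^0(\cK)$, on which $\pi(a)$ acts as $a\otimes\Id$, so $x(\omega\otimes\zeta')=(a\omega)\otimes\zeta'$. Decomposing $a\omega=\langle a\omega,\omega\rangle\,\omega+w$ with $w\in\gH^0$ gives $x(\omega\otimes\zeta')=\langle a\omega,\omega\rangle\,(\omega\otimes\zeta')+w\otimes\zeta'$, and the error term $w\otimes\zeta'$ lies in $\gH^0\otimes\cT(\cK)$. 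Now the decisive step is to apply the outer operator: $A^\ast_{\eta_1}$ annihilates $\gH^0\otimes\cT(\cK)$, so $w\otimes\zeta'$ is killed and only $\langle a\omega,\omega\rangle\,A^\ast_{\eta_1}(\omega\otimes\zeta')=\langle a\omega,\omega\rangle\,A^\ast_{\eta_1}A^\ast_{\eta_2}(\omega\otimes\zeta)$ survives. The same reasoning applies verbatim with $A_{\eta_1,n}$ in place of $A^\ast_{\eta_1}$: since $w\in\gH^0$, the vector $w\otimes\zeta'$ is orthogonal to $\omega\otimes\eta_1^{\otimes n}$ and therefore lies in the kernel of $A_{\eta_1,n}$, leaving exactly $\langle a\omega,\omega\rangle\,A_{\eta_1,n}A^\ast_{\eta_2}(\omega\otimes\zeta)$. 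To finish I would identify the surviving scalar with $\psi(x)$, checking from the definitions of $\psi$ and $\pi$ that $\psi(\pi(a))=\langle a\omega,\omega\rangle$, which converts both expressions into the asserted right-hand sides.

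I expect the only genuine subtlety to be the bookkeeping of which summand of $\cE$ each intermediate vector occupies — specifically, the observation that $\pi(a)$ does \emph{not} preserve the subspace $\omega\otimes\cT(\cK)$ but spills the component $w\otimes\zeta'$ into $\gH^0\otimes\cT(\cK)$, and that this spilled component is precisely what $A^\ast_{\eta_1}$ and $A_{\eta_1,n}$ are built to annihilate. Once that is noticed, the vacuum coefficient $\langle a\omega,\omega\rangle=\psi(x)$ is the only thing that can survive, and both identities follow by linearity; there is no analytic difficulty beyond this structural remark.
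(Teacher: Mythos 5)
Your proof is correct and follows essentially the same route as the paper's: both arguments observe that the range of $A^\ast_{\eta_2}$ consists of vectors $\omega\otimes\zeta'$ with $\zeta'\in\cT^0(\cK)$, that $x\in\pi(\cL(\gH))$ sends such a vector to $\psi(x)\,\omega\otimes\zeta'$ plus a remainder in $\gH^0\otimes\cT^0(\cK)$, and that this remainder lies in the kernels of both $A^\ast_{\eta_1}$ and $A_{\eta_1,n}$. Your version merely spells out the spanning-set reduction and the identification $\psi(\pi(a))=\langle a\omega,\omega\rangle$, which the paper leaves implicit.
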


\begin{proof}
 Since $A_{\eta_2}(\cE)=\omega_1\otimes\cT^0(\cK)$,  for any $\zeta\in\cE$ we have that:
 \begin{align*}
  &A_{\eta_2}\zeta=\omega_1\otimes\zeta^\prime\ \text{for some}\ \zeta^\prime\in\cT^0(\cK)\\
  &xA_{\eta_2}\zeta=\psi(x)\omega_1\otimes\zeta^\prime+v\otimes\zeta^\prime\ \text{for some}\ v\in\cT^0(\cH).
 \end{align*}
 But $\cT^0(\cH)\otimes\cT^0(\cK)\subset \ker(A^\ast_{\eta_1}), \ker(A_{\eta_1,n})$ hence the conclusion.
\end{proof}

In the proof of the main result of this section, Theorem \ref{algcfree}, we will use the following lemma:
\begin{lemma}\label{lemma2}
 Suppose $\cA_1$ and $\cA_2$ are two free independent subalgebras of an algebra $\cA$ with respect to some linear map $\varphi$ and $a_0, a_1, \dots, a_{n+1}\in\cA_1$, $b_1, \dots, b_n\in\cA_2$ are such that $\varphi(a_k)=\varphi(b_k)=0$ for all $1\leq k\leq n$. Then
 \[
 \varphi(a_0b_1\cdots b_na_{n+1})=0.
 \]
\end{lemma}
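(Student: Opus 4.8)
The plan is to reduce the assertion to the defining property of freeness by \emph{centering} the only two factors that are not assumed to satisfy $\varphi(\cdot)=0$, namely the end factors $a_0$ and $a_{n+1}$. Writing $x^{\circ}=x-\varphi(x)1$ for $x\in\cA$ (so that $\varphi(x^{\circ})=0$, using that $\varphi$ is unital and the $\cA_i$ are unital subalgebras), I substitute $a_0=\varphi(a_0)1+a_0^{\circ}$ and $a_{n+1}=\varphi(a_{n+1})1+a_{n+1}^{\circ}$ and expand by linearity of the product and of $\varphi$. This writes $\varphi(a_0b_1\cdots b_na_{n+1})$ as a sum of four terms in which the central alternating block is left intact while each end is either the centered element $a_0^{\circ}$, $a_{n+1}^{\circ}$ or a scalar multiple of $1$.

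The first term, $\varphi(a_0^{\circ}b_1\cdots b_na_{n+1}^{\circ})$, is the expectation of a word whose factors strictly alternate between $\cA_1$ and $\cA_2$ and all of which are centered — the interior $a_k$ and the $b_k$ by hypothesis, the two ends by construction — so it vanishes directly by freeness. In each of the remaining three terms a boundary factor has been replaced by $\varphi(a_0)1$ and/or $\varphi(a_{n+1})1$; I pull the scalars out of $\varphi$ and am left with the shortened words $\varphi(b_1\cdots b_na_{n+1}^{\circ})$, $\varphi(a_0^{\circ}b_1\cdots b_n)$ and $\varphi(b_1\cdots b_n)$. Each of these is again strictly alternating with all factors centered, so each vanishes by freeness as well; here it is essential that the definition imposes no condition on which subalgebra the first and last letters lie in, only that the word alternate and that every factor be centered. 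Summing the four zeros yields $\varphi(a_0b_1\cdots b_na_{n+1})=0$.

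The single point that needs checking is that collapsing $a_0$ or $a_{n+1}$ to a scalar never creates two adjacent factors from the \emph{same} subalgebra. This is immediate because $a_0$ and $a_{n+1}$ occupy the two extreme positions and are each neighboured by a factor from $\cA_2$, so deleting them merely trims the word from an end and leaves the interior alternation undisturbed. Consequently no induction is required: the entire proof is one application of the definition of freeness to four centered alternating words. (The same computation can alternatively be organized as an induction on $n$, centering $a_0$ and applying the inductive hypothesis to the remaining word, but the direct expansion is the more economical route.)
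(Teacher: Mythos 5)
Your proof is correct and is essentially the paper's own argument: both center the two boundary factors $a_0$ and $a_{n+1}$ (the paper writes $d_j = a_j - \varphi(a_j)$ where you write $a_j^{\circ}$), expand $\varphi(a_0 b_1 \cdots b_n a_{n+1})$ by linearity into four terms, and observe that each resulting word is alternating with all factors centered, hence vanishes by the definition of freeness. Your write-up is in fact somewhat more careful than the paper's, which contains typographical slips in the four-term expansion, and you make explicit the two points the paper leaves tacit (that centering stays inside the unital subalgebra, and that trimming an end factor preserves alternation).
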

\begin{proof}
 Take $d_j=a_j-\varphi(a_j)$, $j\in\{0, n+1\}$. Then $\varphi(d_j)=0$ and $a_j=\varphi(a_j)+d_j$, therefore
 \begin{eqnarray*}
 \varphi(a_0b_1\cdots b_na_{n+1})&=&\varphi(a_0)\varphi(a_0b_1\cdots b_nd_{n+1})+\varphi(a_0)\varphi(a_0b_1\cdots b_n)\varphi(a_{n+1})\\
 &&+\varphi(d_0b_1\cdots b_nd_{n+1})+\varphi(d_0b_1\cdots b_n)\varphi(a_{n+1})
 \end{eqnarray*}
 and all the above four terms cancel from the definition of free independence.
\end{proof}

\begin{thm}\label{algcfree} Let $\cA_1, \cA_2$ be two subalgebras of $\cL(\gH)$ which are free independent with respect to $\psi$ and let $\eta_1,\eta_2$ be two orthogonal unit vectors $\cK$. Then the  algebras $\gA_1=\pi(\cA_1)\vee\cD(\eta_1)$ and $\gA_2=\pi(\cA_2)\vee\cD(\eta_2)$  are c-free with respect to $(\phi,\psi)$.
\end{thm}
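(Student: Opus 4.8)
The plan is to verify the two clauses in the definition of c-freeness separately, and two preliminary remarks drive everything. First, the compression to the first summand $\rho\colon\gA_1\vee\gA_2\to\cL(\gH)$, $\rho(T)=T|_{\gH}$, is well defined and multiplicative (every generator sends $\gH$ into $\gH$), satisfies $\rho(\pi(a))=a$ and $\rho(A^\ast_{\eta_i})=\rho(A_{\eta_i,n})=0$, so $\rho(\gA_i)=\cA_i$; moreover, as the vector defining $\psi$ belongs to this copy of $\gH$, one has $\psi=\psi\circ\rho$. Second, $P:=\Id_{\mathbb{C}\Omega}=A_{\eta_1,0}=A_{\eta_2,0}$ lies in $\gA_1\cap\gA_2$, is the rank-one projection $\xi\mapsto\langle\xi,\Omega\rangle\Omega$, and hence splits $\phi$ multiplicatively, $\phi(APB)=\phi(A)\phi(B)$, since $PB\Omega=\phi(B)\Omega$.

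Freeness with respect to $\psi$ is then immediate. For an alternating word $X_1\cdots X_n$ with $\psi(X_j)=0$, applying $\rho$ yields $\rho(X_j)\in\cA_{\epsilon(j)}$ with $\psi(\rho(X_j))=\psi(X_j)=0$, still alternating; freeness of $\cA_1,\cA_2$ with respect to $\psi$ gives $\psi(\rho(X_1)\cdots\rho(X_n))=0$, and since $\rho$ is a $\psi$-preserving homomorphism this is $\psi(X_1\cdots X_n)=0$.

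For the factorization of $\phi$ I would first pass to a centred situation. Writing $X_j=\phi(X_j)P+Z_j$ one has $\phi(Z_j)=\phi(X_j)(1-\phi(P))=0$ and, because $\rho(P)=0$ forces $\psi(P)=0$, also $\psi(Z_j)=0$. Expanding $X_1\cdots X_n=\prod_j(\phi(X_j)P+Z_j)$ and cutting each summand at every factor $P$ via $\phi(APB)=\phi(A)\phi(B)$, that summand becomes a product of the scalars $\phi(X_j)$ with $\phi$ of the maximal consecutive $Z$-blocks between the $P$'s. The all-$P$ summand contributes $\prod_j\phi(X_j)$, while every other summand carries a nonempty alternating block, so the whole identity reduces to the centred case
\[
\phi(Z_1\cdots Z_m)=0\qquad\text{whenever }Z_1,\dots,Z_m\text{ alternate and }\phi(Z_l)=\psi(Z_l)=0 .
\]
To prove this I would compute $\langle Z_1\cdots Z_m\Omega,\Omega\rangle$ by expanding the $Z_l$ into monomials in $\pi(\cA_{\epsilon(l)})$, $A^\ast_{\eta_{\epsilon(l)}}$ and $A_{\eta_{\epsilon(l)},\bullet}$ and reducing: $\pi(a)\pi(b)=\pi(ab)$ and $\pi(a)A_{\eta,n}=0$ and the creation/annihilation relations of Remark \ref{remark1}, the orthogonality identity of Remark \ref{remark2}, and Lemma \ref{lemma1}, which turns any $\pi(a)$ standing to the left of a creation operator into the scalar $\psi(a)$. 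A monomial pairs nontrivially with $\Omega$ only when all its creations are matched by annihilations (a leading or trailing $\pi$ kills $\Omega$, as $\pi(a)\Omega=\pi(a)^\ast\Omega=0$), Remark \ref{remark2} forbids matching across opposite colours, and the interior $\pi$'s collapse by Lemma \ref{lemma1}. Each surviving term then carries a factor equal to $\psi$ of an alternating product of elements of $\cA_1$ and $\cA_2$ with centred interior entries, which vanishes by Lemma \ref{lemma2}; hence $\langle Z_1\cdots Z_m\Omega,\Omega\rangle=0$.

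The hard part will be precisely this last step: controlling, in a colour-alternating product, which creations pair with which annihilations under the orthogonality $\eta_1\perp\eta_2$, and checking that after the Lemma \ref{lemma1} collapses the residual $\psi$-factor is genuinely a centred alternating product meeting the hypotheses of Lemma \ref{lemma2}. Tracking this colour-matched nested pairing and the propagation of the centring conditions through it is where the real work lies; everything else is formal.
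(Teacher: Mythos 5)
Your preliminary reductions are sound, and in substance they reproduce the paper's own: your compression $\rho$ onto the first summand is the paper's observation that every monomial containing a factor from $\cD(\eta_i)$ has range inside $\gH\otimes\cT(\cK)$, which is orthogonal to $\Omega_1$, so that $\psi$ only sees the $\pi$-parts and freeness of $\cA_1,\cA_2$ transfers directly; and your centering $X_j=\phi(X_j)P+Z_j$ with $P=\Id_{\mathbb{C}\Omega}=A_{\eta_i,0}$ is exactly the device the paper uses, except that the paper centers only the outermost factor inside an induction on the length of the word, while you center all factors at once. Both wrappers are valid, and yours is arguably the cleaner one.

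However, the statement you have reduced everything to --- $\phi(Z_m\cdots Z_1)=0$ for colour-alternating $Z_l$ with $\phi(Z_l)=\psi(Z_l)=0$ --- is not a technical remainder: it is the entire content of the theorem, and your plan for it is both unexecuted (as you concede) and aimed at the wrong mechanism. There is no nested-pairing combinatorics to control here, because $A_{\eta,n}$ is not a free-Fock annihilator: it vanishes off the single line $\mathbb{C}\,\omega\otimes\eta^{\otimes n}$ and has range $\mathbb{C}\Omega$, so each term is decided by one local event, not by a global matching. The missing argument runs as follows. (i) Using Remark \ref{remark1} ($xA_{\eta,n}=0$ for $x\in\pi(\cL(\gH))$, $A_{\eta,n}(A^\ast_\eta)^p=A_{\eta,n-p}$) and Lemma \ref{lemma1}, each factor may be taken to be either a pure element of $\pi(\cA_{\epsilon(j)})$ --- which still has $\psi=0$, since the discarded $\cD$-bearing monomials are automatically $\psi$-null --- or a monomial $y'(A^\ast_\eta)^pA_{\eta,m}y$ or $y'(A^\ast_\eta)^py$; after $\phi$-centering, the only monomials of the top factor that can contribute are of the form $A_{\eta_1,m}y_1$ with $m>0$, since all others have range orthogonal to $\Omega$. (ii) Take the largest index $j<n$ whose monomial contains a $\cD$-generator. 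If its leftmost such generator is an annihilator, the term dies by $xA_{\eta,n}=0$; if it is $A^\ast_{\eta_2}$, Lemma \ref{lemma1} scalarizes the intervening algebra elements and Remark \ref{remark2} gives $A_{\eta_1,m}(A^\ast_{\eta_1})^{p}A^\ast_{\eta_2}=0$; if it is $A^\ast_{\eta_1}$, Lemma \ref{lemma1} collapses the entire block between the top annihilator and this creator into the single scalar $\psi(y_1x_{n-1}\cdots x_{k+1}y)$, an alternating word with $\psi$-centred interior and both ends in $\cA_1$, which vanishes by Lemma \ref{lemma2}. The structural point you are missing is that resolving this one outermost match terminates every term outright --- nothing is left to recurse on, and the propagation of centring conditions you worry about is handled by the single observation in (i). As it stands, your proposal stops exactly where the proof begins.
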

\begin{proof}
  It suffices to prove that for $x_1,\dots, x_m$ such that  $x_j\in\cA(e_{\epsilon(j)}, \eta_{\epsilon(j)})$ with $\epsilon(i)\neq\epsilon(i+1)$ and $\psi(x_k)=0$, we have
 \begin{align}
 &\psi(x_m\cdots x_2x_1)=0 \label{free}\\
 &\phi(x_m\cdots x_2x_1)=\phi(x_m)\cdots\phi(x_2)\phi(x_1)\label{cfree}
 \end{align}
 Note that $\cT(\cH)\otimes\cT(\cK)\perp\Omega_1$ and
\begin{eqnarray*}
\cD(\eta_i)(\cE)&\subseteq&\cT(\cH)\otimes\cT(\cK)\\
\pi(\cA_i)\left(\cT(\cH)\otimes\cT(\cK)\right)&\subseteq&\cT(\cH)\otimes\cT(\cK),
\end{eqnarray*}
hence $\psi$ cancels on all reduced products from $\gA_1\cup\gA_2$ that contain factors from $\cD(\eta_1)$ or $\cD(\eta_2)$. It follows that we only need to prove the relation (\ref{free}) for $x_1,\dots, x_m\in\pi(\cA_1)\cup\pi(\cA_2)$, statement which is equivalent to the free independence of $\cA_1$ and $\cA_2$.

We will prove (\ref{cfree}) by induction on $n$. For $n=1$, the assertion is trivial. For the induction step, it suffices to prove that
\begin{equation}\label{eq11}
\phi(x_n\cdots x_1)=\phi(x_n)\phi(x_{n-1}\cdots x_1).
\end{equation}
 Taking $x^\prime_n=x_n-\phi(x_n)\Id_{\mathbb{C}\Omega}$, we have that $\phi(x^\prime_n)=0$ hence (\ref{eq11}) is equivalent to $\phi(x_n\cdots x_1)=0$ whenever $\phi(x_n)=0$.

 Suppose $x_n\in\cA_1\vee\cD(\eta_1)$. then $x_n$ is a linear combination of monomials in elements from $\cA_1$ and $\cD(\eta_1)$. From Lemma \ref{lemma1}, we can suppose that all factors from $\cD(\eta_1)$ are consecutive, so $x_n$ is a sum of elements from $\cA_1$ and monomials of the types $y^\prime_1(A^\ast_{\eta_1})^pA_{\eta_1,m}y_1$ or $y^\prime_1(A^\ast_{\eta_1})^py_1$, with $y^\prime_1, y_1\in\cA_1\cup{\Id}$ and $p\geq0$. If $y^\prime_1\neq\Id$ or $p\neq0$, then $x_n(\cE)\perp\Omega$, hence $\phi(x_n\cdots x_1)=0$. Also, if $m=0$, then either $y_1=\Id$ and $\phi(x_n)\neq0$ or $y_1\in\cA_1$ and $x_n=0$. Therefore we can suppose that $x_n=A_{\eta_1,m}y_1$ for some $m>0$ and $y_1\in\cA_1$ and all other $x_j$ are either elements of $\cA_{\epsilon(j)}$ or monomials as above.

Let $k=\max\{j: x_j\ \text{contains}\ A^\ast_{\epsilon(j)}\}$ and $p=\max\{j: x_j\ \text{contains}\ A_{\epsilon(j)}\}$. If $p>k$, then $x_n\cdots x_p=A_{\eta_1,m}yA_{\eta_{\epsilon(p)}}y^\prime$,
for some $y\in\cA_1\vee\cA_2$ and
$y^\prime\in\cA_{\epsilon(p)}\vee\cD(\eta_{\epsilon(j)})$. From Lemma \ref{lemma1} and Remark \ref{remark1}, $A_{\eta_1,m}yA_{\eta_{\epsilon(p)}}=
A_{\eta_1,m}\psi(y)A_{\eta_{\epsilon(p)}}=0$.

Suppose that $p\leq k$. If $\epsilon(k)=2$, then $x_n\cdots x_k=A_{\eta_1,m}yA^\ast_{\eta_2}y^\prime$, for some $y\in\cA_1\vee\cA_2$ and $y^\prime\in\cA_2\vee\cD(\eta_2)$.
Applying Lemma \ref{lemma1} and Remark \ref{remark2}, we have
\[
x_n\cdots x_k=A_{\eta_1,m}\psi(y)A^\ast_{\eta_2}y^\prime=0.
\]
 If $\epsilon(j)=1$, then, from Remark \ref{remark2},
 \begin{eqnarray*}
 x_n\cdots x_k
 &=&
 A_{\eta_1,m}y_1x_{n-1}\cdots x_{k+1}y A^\ast_{\eta_1}y^\prime\\
 &=&
 A_{\eta_1,m}\psi(y_1x_{n-1}\cdots x_{k+1}y)A^\ast_{\eta_1}y^\prime
 \end{eqnarray*}
but $\psi(y_1x_{n-1}\cdots x_{k+1}y)=0$ from Lemma \ref{lemma2}, so q.e.d..

\end{proof}

\begin{cor}\label{cor1}
With the notations from \S1, take $\gH=\mathbb{C}\omega\oplus\cT^0(\cH)$, where $\cH$ is a complex Hilbert space of dimension at least 2.

Let $e_1, e_2$, respectively $\eta_1, \eta_2$ be two pairs of orthogonal unit vector from $\cH$, respectively $\cK$. Then the algebras $\cD(\eta_1)\vee\pi(\cA(e_1))$ and $\cD(\eta_2)\vee\pi(\cA(e_2))$ are c-free with respect to the maps $\phi$ and $\psi$ considered above.
\end{cor}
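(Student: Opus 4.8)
The plan is to read the corollary as the special case of Theorem~\ref{algcfree} obtained by choosing for the two $\psi$-free algebras the operator algebras $\cA(e_1)$ and $\cA(e_2)$ generated by the creation and annihilation operators, whose freeness is supplied ready-made by Theorem~\ref{freeprob}(1). Thus essentially everything reduces to verifying that the hypotheses of Theorem~\ref{algcfree} are met and that the notion of freeness used there coincides with the vacuum-state freeness of Theorem~\ref{freeprob}.

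First I would note that the choice $\gH=\mathbb{C}\omega\oplus\cT^0(\cH)$ is nothing but the full Fock space $\cT(\cH)$ of \S1, with the distinguished unit vector $\omega$ of $\gH$ identified with the vacuum $\omega$ of $\cT(\cH)$. In particular the operators $a^\ast_{e_i}$ and $a_{e_i}$ act on $\gH=\cT(\cH)$, so the algebras $\cA(e_1),\cA(e_2)$ are genuine subalgebras of $\cL(\gH)$, exactly as Theorem~\ref{algcfree} demands.

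The one point genuinely needing verification is the matching of states, which I expect to be the main (though minor) obstacle. By Theorem~\ref{freeprob}(1), since $e_1\perp e_2$, the algebras $\cA(e_1)$ and $\cA(e_2)$ are free with respect to the vacuum state $T\mapsto\langle T\omega,\omega\rangle$ on $\cL(\cT(\cH))$. I would check that this vacuum state is precisely $\psi\circ\pi$: since $\pi$ acts as the given operator on the first summand $\gH$ of $\cE=\gH\oplus(\gH\otimes\cT(\cK))$, and since the $\psi$-vacuum is the distinguished vector $\omega$ lying in that copy of $\gH$, one obtains $\psi(\pi(T))=\langle T\omega,\omega\rangle$ for every $T\in\cL(\gH)$. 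Hence the vacuum-freeness of $\cA(e_1),\cA(e_2)$ is exactly their freeness with respect to $\psi$, so the freeness hypothesis of Theorem~\ref{algcfree} is in force.

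With the $e_i$ orthogonal unit vectors producing $\psi$-free algebras $\cA(e_i)\subseteq\cL(\gH)$ and the $\eta_i$ orthogonal unit vectors in $\cK$, Theorem~\ref{algcfree} applies and yields that $\pi(\cA(e_i))\vee\cD(\eta_i)=\cD(\eta_i)\vee\pi(\cA(e_i))$ are c-free with respect to $(\phi,\psi)$, which is the claim. The only delicate step is the identification $\psi\circ\pi=\langle\,\cdot\,\omega,\omega\rangle$; the remainder is a direct substitution into Theorem~\ref{algcfree}, using the symmetry of the operation $\vee$.
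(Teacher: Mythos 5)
Your proposal is correct and is essentially the paper's own proof: the paper likewise invokes Theorem~\ref{freeprob}(1) for the $\psi$-freeness of $\cA(e_1)$ and $\cA(e_2)$ in $\cL(\cT(\cH))=\cL(\gH)$ and then applies Theorem~\ref{algcfree}. Your extra verification that the vacuum state on $\cL(\gH)$ agrees with $\psi\circ\pi$ is a detail the paper leaves implicit, not a different argument.
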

\begin{proof}

 From Theorem \ref{freeprob}(1), the algebras $\cA(e_1)$ and $\cA(e_2)$ are free in $\cL(\cT(\cH))$ with respect to $\langle \cdot\omega, \omega\rangle$, and the conclusion follows from Theorem \ref{algcfree}.

\end{proof}

\section{The ${}^cR$- and ${}^cT$- transforms}

Consider an algebra $\mathcal{A}$ with two states $\phi, \psi:\mathcal{A}\lra\mathbb{C}$ and $X\in\mathcal{A}$. Let $m_X(z)=\sum_{k=1}^\infty\psi(X^k)$, respectively $M_X(z)=\sum_{k=1}^\infty\phi(X^k)$ be the moment-generating series of $X$ with respect to  $\psi$, respectively $\phi$. We define the ${}^cR$-, and, if $\psi(X)\neq0$, the ${}^cT$-transforms of $X$ by the following equations:
\begin{eqnarray}
{}^cR_X(z[1+m_X(z)])\cdot(1+M_X(z))&=&M_X(z)[1+m_X(z)]\label{crdef}\\
\left[{}^cT_X(m_X(z))\right]\cdot(1+M_X(z))&=&\frac{M_X(z)}{z}\label{ctdef}
\end{eqnarray}

 With the notations from Section 2, for $\eta\in\cK$ a non-zero vector and  $f=\sum_{k=0}^N f_k X^k$ a polynomial with complex coefficients, we define $A_{\eta,f^\otimes}$  via:
 \begin{eqnarray*}
 A_{\eta,f^\otimes}&=&\sum_{k=0}^N f_k\cdot A_{\eta, k}\\
 \end{eqnarray*}

\subsection{The ${}^cR$-transform}

\begin{thm}\label{crthm}
Let $\eta$ be a unit vector from $\cK$, $b\in\pi(\cL(\gH))$ and $f=\sum_{p=0}^Mg_p\cdot z^p$  be a polynomial with complex coefficients. Consider $\alpha\in\cL(\cE)$ given by:
\[
\alpha=b+A^\ast_\eta+A_{\eta,f^\otimes}
\]
Then ${}^cR_\alpha(z)=zf(z)$.
\end{thm}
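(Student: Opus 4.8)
The plan is to compute the two moment generating series $m_\alpha(z)=\sum_{k\geq1}\psi(\alpha^k)z^k$ and $M_\alpha(z)=\sum_{k\geq1}\phi(\alpha^k)z^k$ directly from the action of $\alpha$ on $\cE$, and then to verify that ${}^cR_\alpha(z)=zf(z)$ is the solution of the defining equation (\ref{crdef}). First I would dispose of $m_\alpha$. Since $\psi(\cdot)=\langle\,\cdot\,\Omega_1,\Omega_1\rangle$ with $\Omega_1$ lying in the first summand $\gH$ of $\cE$, and since $A^\ast_\eta$, every $A_{\eta,n}$ and $A_{\eta,0}=\Id_{\mathbb{C}\Omega}$ annihilate that summand, the operator $\alpha$ acts on $\gH$ exactly as $b$ does; hence $\psi(\alpha^k)=\psi(b^k)$ for all $k$ and $m_\alpha=m_b$, the $\psi$-moment series of $b$. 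Substituting ${}^cR_\alpha(z)=zf(z)$ into (\ref{crdef}) and cancelling the invertible factor $1+m_\alpha(z)$, the whole statement reduces to the single functional identity
\[
M_\alpha(z)=z\,f\big(z(1+m_b(z))\big)\,\big(1+M_\alpha(z)\big),
\]
equivalently $F(z):=M_\alpha(z)/(1+M_\alpha(z))=z\,f(z(1+m_b(z)))$, where $1+M_\alpha(z)=\langle(\Id-z\alpha)^{-1}\Omega,\Omega\rangle$ as a formal power series, so that $1+M_\alpha=(1-F)^{-1}$ is the usual first-return relation.

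I would compute $F$ by a first-return analysis of the trajectory of $\Omega$ under $\alpha$, organizing $\cE$ along the ladder $\Omega,\ \omega\otimes\eta,\ \omega\otimes\eta^{\otimes2},\dots$. From the explicit formulas one checks that $A^\ast_\eta$ moves $\omega\otimes\eta^{\otimes n}$ one rung up, that $A_{\eta,p}$ sends $\omega\otimes\eta^{\otimes p}$ straight back to $\Omega$ while killing every other ladder vector, that $\Id_{\mathbb{C}\Omega}$ fixes $\Omega$, and that $b$ preserves each fiber $\gH\otimes\eta^{\otimes n}$ and acts there as the underlying operator of $\gH$. The two crucial points, which I expect to be the main obstacle to phrase cleanly, are: (i) at the bottom rung $b$ annihilates the $\Omega$-direction, because $\pi(\Id_{\gH})=\Id_{\cE_0}$, so an application of $b$ to $\Omega$ can never contribute to a return; and (ii) on a rung $n\geq1$ the off-diagonal vectors $\gH^0\otimes\eta^{\otimes n}$ are killed by $A^\ast_\eta$ and by every $A_{\eta,p}$ (directly from the definitions, together with Remark \ref{remark1}), so a visit to that rung may only be exited from the $\omega$-component. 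Consequently a visit to rung $n\geq1$ contributes exactly the scalar $\sum_{j\geq0}\psi(b^j)z^j=1+m_b(z)$, summed over the number $j$ of intermediate applications of $b$.

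Granting (i) and (ii), every first-return excursion is forced to climb from $\Omega$ to some top rung $p$ by $p$ applications of $A^\ast_\eta$, to perform one $b$-visit on each of the rungs $1,\dots,p$, and to descend in one step via $g_p A_{\eta,p}$; the empty excursion $p=0$ is the single fixed step $g_0\,\Id_{\mathbb{C}\Omega}$. This yields the contribution $g_p\,z^{p+1}\big(1+m_b(z)\big)^p$ for each $p\geq0$, whence
\[
F(z)=\sum_{p=0}^{M} g_p\,z^{p+1}\big(1+m_b(z)\big)^p=z\,f\big(z(1+m_b(z))\big),
\]
which is precisely the required identity; therefore ${}^cR_\alpha(z)=zf(z)$, and the theorem follows. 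As a sanity check one can match low-order coefficients, e.g. $\phi(\alpha)=g_0$ and $\phi(\alpha^2)=g_0^2+g_1$, against the expansion of the displayed identity.
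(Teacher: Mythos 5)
Your proposal is correct and follows essentially the same route as the paper: your first-return (ladder) decomposition is precisely the paper's classification of words in $\{b,\,A^\ast_\eta,\,A_{\eta,f^\otimes}\}$ by the first occurrence of $A_{\eta,f^\otimes}$ (the sets $I[n,j]$), your structural facts (i)--(ii) are the paper's relations (\ref{i})--(\ref{iii}) together with Lemma \ref{lemma1}, and your identity $M_\alpha(z)=z\,f\big(z(1+m_b(z))\big)\,\big(1+M_\alpha(z)\big)$ is just the generating-function form of the paper's coefficient recurrence (\ref{recg}). The difference is purely one of bookkeeping (power-series identities versus coefficientwise recurrences), not of substance.
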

\begin{proof}
It suffices to show that $zf(z)$ satisfies the equation (\ref{crdef}), which is equivalent to the following recurrence
  \begin{equation}\label{recg}
  \phi(\alpha^n)=\sum_{0\leq p \leq n}
  \sum_{\substack{q_1,\dots,q_{p}\geq0\\n\geq 1+p+q_1+\cdots q_{p}}}
  \phi\left(\alpha^{n-1-(p+q_1+\dots+q_{p})}\right)\cdot f_p\cdot \psi(\alpha^{q_1})\cdots \psi(\alpha^{q_{p}})
  \end{equation}
 for all $n>0$.

Let us denote $A=A^\ast_\eta$ and  $B=A_{\eta,f^\otimes}$. The triple $(b, A, B)$ satisfies the following relations:
\begin{align}
&b\Omega=0, B(\cE)=\mathbb{C}\Omega\label{i}\\
&Ab^qA=A\psi(b^q)A, Bb^qA=B\psi(b^q)A\ \text{for all}\ q>0\label{ii}\\
&\phi(BA^n)=f_n,\ \text{for all}\ n\geq0\label{iii}
\end{align}
(equations (\ref{i}) and (\ref{ii}) are consequences of the relations from Remark \ref{remark1}, and (\ref{iii}) follows from Lemma \ref{lemma1}.)

Let $I=\{b, A, B\}$. Since $\alpha=\sum_{x\in I} x$, we have that
 \begin{equation}\label{eq201}
 \phi(\alpha^n)=\sum_{(x_1,\dots, x_n)\in I^n}\phi( x_n x_{n-1}\cdots x_1)
 \end{equation}
To further simplify the writting, we introduce the following notations
 \begin{align*}
 &I[n,j]=\Big\{(x_1,\dots, x_n)\in I^n,\min\{k:\ x_k=B\}=j\Big\}.
 \end{align*}

Since $b\Omega=0$ and $A(\cE)\perp\Omega$, we have that  $\phi(x_n\cdots x_1)=0$ unless $x_n=B$, hence $(x_n, \dots, x_1)\in I[n,j]$ for some $j$.
Also, for $(x_n,\dots,x_1)\in I[n,j]$, since $B(\cE)=\mathbb{C}\Omega$, we have that $x_j\cdots x_1\Omega=\phi(x_j\cdots x_1)$, so
$\phi(x_n\cdots x_1)=\phi(x_n\cdots x_{j+1})\phi(x_j\cdots x_1)$,
 therefore
(\ref{eq201}) becomes
 \begin{eqnarray}
 \phi(\alpha^n)
 &=&
 \sum_{j=1}^n\sum_{(x_1,\dots, x_n)\in I[n,j]}\phi(x_n\cdots x_1)\nonumber\\
 &=&\sum_{j=1}^n\sum_{(x_1,\dots, x_n)\in I[n,j]}\phi(x_n\cdots x_{j+1})\phi(x_j\cdots x_1)\nonumber\\
 &=&\sum_{j=1}^n\sum_{(x_1,\dots, x_j)\in I[j,j]}\phi(\alpha^{n-j})\phi(x_j\cdots x_1).\label{eq202}
 \end{eqnarray}

 Consider $(x_1,\dots,x_n)\in I[n,n]$. If $n=1$, then $\phi(x_n\cdots x_1)=\phi(B)=f_0$. If $n>1$, then $x_1\Omega=0$ unless $x_1=A$. Let $1=k_1<\dots<k_p<n$ be the set of all indices $k$ such that $x_k=A$. Letting $q_{j}=k_{j+1}-k_j-1$,\ $q_p=n-k_{p}-1$ and applying property (\ref{iii}), we obtain:
 \begin{eqnarray}
 \sum_{(x_1,\dots,x_n)\in I[n,n]}\phi(x_n\cdots x_1)&=&\sum_{p=1}^{n-1}\sum_{\substack{0\leq q_1,\dots, q_{p}\\ q_1+\dots q_{p}<n-p}}\phi\left(B\cdot \psi(b^{q_{p}})\cdot A\cdots \psi(b^{q_1})A\right)\nonumber\\
 &=&\sum_{\substack{0\leq q_1,\dots, q_{p}\\ q_1+\dots q_{p}<n-p}} f_p\cdot \psi(b^{q_{p}})\cdots\psi(b^{q_1})\label{eq206}.
 \end{eqnarray}
 Finally, the equality $\psi(b^q)=\psi(\alpha^q)$ and equations (\ref{eq202}), (\ref{eq206}) imply (\ref{recg}), so q.e.d..

\end{proof}

\subsection{The ${}^cT$-transform}

\begin{thm}
Let $\eta$ be a unit vector  from $\cK$, $d\in\pi(\cL(\gH))$ and $f(z)=\sum_{k=0}^Mf_k\cdot z^k$  be a polynomial  with complex coefficients such that  $\psi(b),f_0\neq 0$. Consider $\beta\in\cL(\cE)$ given by:
\[
\beta=d+ d A^\ast_\eta +A_{\eta,f^\otimes}.
\]
Then
 ${}^cT_\beta(z)=f(z)$.
\end{thm}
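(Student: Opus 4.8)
The plan is to follow the strategy of Theorem \ref{crthm}: show that $f$ satisfies the defining equation (\ref{ctdef}), and then invoke the fact that, once $\psi(\beta)=\psi(d)\neq0$, equation (\ref{ctdef}) determines ${}^cT_\beta$ uniquely as a formal power series. Substituting ${}^cT_\beta=f$ into (\ref{ctdef}), clearing the denominator and comparing the coefficient of $z^n$ reduces the statement to the moment recurrence
\begin{equation*}
\phi(\beta^n)=\sum_{p\geq0}\ \sum_{\substack{r_1,\dots,r_p\geq1\\ r_1+\cdots+r_p\leq n-1}} f_p\,\psi(\beta^{r_1})\cdots\psi(\beta^{r_p})\,\phi\big(\beta^{\,n-1-(r_1+\cdots+r_p)}\big),
\end{equation*}
with the convention $\phi(\beta^0)=1$ coming from the factor $1+M_\beta(z)$. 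This is the multiplicative counterpart of (\ref{recg}), and the whole task is to verify it directly from the Fock-space model; the reduction itself is routine formal-series bookkeeping.

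To this end I would set $D=d$, $C=dA^\ast_\eta$ and $B=A_{\eta,f^\otimes}$, so that $\beta=D+C+B$, and first record the structural facts analogous to (\ref{i})--(\ref{iii}). Since $\pi(\Id_\gH)$ is the projection onto $\cE\ominus\mathbb{C}\Omega$, one has $D\Omega=0$ and $D(\cE),\,C(\cE)\perp\Omega$, while $B(\cE)=\mathbb{C}\Omega$; consequently, in the expansion $\phi(\beta^n)=\sum_{(x_1,\dots,x_n)\in\{D,C,B\}^n}\phi(x_n\cdots x_1)$, only words with $x_n=B$ survive. Second, on the first summand $\gH\subset\cE$ both the creation and the annihilation parts vanish and $\beta$ acts as $d$, so $\psi(\beta^q)=\psi(d^q)$ for every $q$, exactly as the equality $\psi(b^q)=\psi(\alpha^q)$ was used at the close of Theorem \ref{crthm}. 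These relations follow from Remark \ref{remark1} and Lemma \ref{lemma1}.

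Because $B(\cE)=\mathbb{C}\Omega$, I would break each surviving word at the position $j$ of its first $B$: applying $B$ there returns a scalar multiple of $\Omega$, so $\phi(x_n\cdots x_1)=\phi(x_n\cdots x_{j+1})\,\phi(x_j\cdots x_1)$, and summing the unconstrained letters above position $j$ gives $\phi(\beta^n)=\sum_{j=1}^n\phi(\beta^{n-j})\,\Theta_j$, where $\Theta_j$ is the sum of $\phi(Bx_{j-1}\cdots x_1)$ over $x_1,\dots,x_{j-1}\in\{D,C\}$. The core computation is $\Theta_j$: since $D\Omega=0$, the first letter must be $C$, and by tracking the running vector I would show that a block consisting of one $C$ followed by $q$ copies of $D$ multiplies the accumulated scalar by $\psi(d^{\,1+q})=\langle a^{\,1+q}\omega,\omega\rangle$ and raises the $\eta$-degree by one, while the terminal $B$ reads off the coefficient $f_p$, with $p$ the total number of $C$'s. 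The resulting bijection between words and compositions $r_i=1+q_i\geq1$ with $r_1+\cdots+r_p=j-1$ yields $\Theta_j=\sum_{p\geq0}\sum_{r_1+\cdots+r_p=j-1}f_p\,\psi(d^{r_1})\cdots\psi(d^{r_p})$.

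Substituting this into $\phi(\beta^n)=\sum_{j}\phi(\beta^{n-j})\Theta_j$, re-indexing by $s=j-1$ and replacing $\psi(d^{r_i})$ by $\psi(\beta^{r_i})$, produces exactly the recurrence of the first paragraph, which finishes the proof. I expect the delicate point to be the evaluation of $\Theta_j$: one must keep track simultaneously of the $\gH$-component $a^m\omega$ of the running vector, of its $\eta$-degree, and of the scalar accumulated each time $C$ projects that component onto $\omega$; in particular one must handle correctly the exceptional behaviour of $\Omega$ under $D$, which is what forces the first letter to be $C$ and accounts for the shift in $r_i=1+q_i$. Everything else is parallel to Theorem \ref{crthm}.
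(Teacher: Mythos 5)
Your proposal is correct and takes essentially the same route as the paper's own proof: the same three-letter decomposition $\beta=d+dA^\ast_\eta+A_{\eta,f^\otimes}$, the same factorization of each contributing word at the first occurrence of $B$ (the paper's equation (\ref{eq208})), and the same evaluation of the irreducible blocks via Lemma \ref{lemma1} together with $\phi(BA^p)=f_p$, where each block of one $dA^\ast_\eta$ followed by $q$ copies of $d$ contributes $\psi(d^{1+q})$. The only cosmetic difference is that you write out the target moment recurrence explicitly and phrase the block computation as tracking the running vector, whereas the paper leaves the recurrence implicit in matching (\ref{eq206}) and (\ref{eq208}) against (\ref{ctdef}).
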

\begin{proof}
 The proof is similar to the one of Theorem \ref{crthm}. Denote again $A=A^\ast_\eta$, $B=A_{\eta, f^\otimes}$  and consider the sets
  \begin{align*}
  &J=\{d,dA, B\}\\
  &J[n,l]=\{(x_1,\dots,x_n)\in J^n, min\{k:\ x_k=B\}=l\}.
\end{align*}
For $(x_1,\dots,x_n)\in J^n$, we have that $\phi(x_n\dots x_1)=0$ unless $x_n=B$, hence $(x_1,\dots,x_n)\in J[n,l]$ for some $1\leq l\leq n$. Also, note that equation (\ref{eq202}) holds true if we replace $I$ with $J$, therefore
\begin{equation}\label{eq208}
\phi(\beta^n)=\sum_{l=1}^n\phi(\beta^{n-l})\sum_{(x_1,\dots,x_l)\in J[l,l]}\phi(x_l\cdots x_1).
\end{equation}
Fix $n>0$ and let $(x_1,\dots,x_1)\in J[n.n]$. If $n=1$, then $\phi(x_n,\dots,x_1)=\phi(B)=f_0$. If $n=1$, then $\phi(x_n\cdots x_1)$ cancels unless $x_1=dA$.  Let $1=k_1<k_2<\dots<k_p\leq n-1$ be the set of indices $k$ such that $x_k=dA$. Taking $q_{j}=k_{j+1}-k_j-1$  for $1<j<n-1$ and $q_p=n-k_p-1$, and applying property (\ref{iii}), we obtain:

 \begin{eqnarray}
 \sum_{(x_1,\dots,x_n)\in J[n,n]}\phi(x_n\cdots x_1)
 &=&
 \sum_{p=1}^{n-1}\sum_{\substack{0\leq q_1,\dots, q_{p}\\ q_1+\dots q_{p}<n}}
 \phi\left(B\cdot (b^{q_p-1})\cdot dA\cdot b^{q_{p-1}-1}\cdots  b^{q_1-1}\cdot dA \right)\nonumber\\
  &=&
 \sum_{p=1}^{n-1}\sum_{\substack{0\leq q_1,\dots, q_{p}\\ q_1+\dots q_{p}<n}}
 \Big(B\cdot\psi(d^{q_p})\cdot A\cdot \psi(b^{q_{p-1}})\cdots \psi(d^{q_1})A\Big)\nonumber\\
 &=&
 \sum_{p=1}^{n-1}\sum_{\substack{0\leq q_1,\dots, q_{p}\\ q_1+\dots q_{p}<n}}
 f_p\cdot\psi(d^{q_p})\cdots\psi(d^{q_1})\label{eq206}.
 \end{eqnarray}

And the conclusion follows, since (\ref{eq206}), (\ref{eq208}) and the identity $\psi(\beta^q)=\psi(d^q)$  imply the $f(z)$ satisfies (\ref{ctdef}).

\end{proof}

\section{Addition and multiplication of c-free random variables}

\begin{thm}\label{mainthm}Let $\eta_1, \eta_2$ be  orthogonal unit vectors from $\cK$, let $b_1,b_2, d_1,d_2$ be some elements from $\pi(\cL(\gH))$ and let $f_1,f_2, F_1, F_2$ be polynomials with complex coefficients, such that $\psi(d_i\neq0\neq F_i(0)$.  Define ($i=1,2$):
\begin{align*}
&\alpha_i=b_i+A^\ast_{\eta_i}+A_{\eta_i,{f_i}^\otimes}\\
&\beta_i=d_i
+d_i\cdot A^\ast_{\eta_i}
+A_{\eta_i,{F_i}^\otimes}.
\end{align*}

Then ${}^cR_{\alpha_1+\alpha_2}(z)={}^cR_{\alpha_1}(z)+{}^cR_{\alpha_2}(z)$
 and ${}^cT_{\beta_1\cdot\beta_2}(z)={}^cT_{\beta_1}(z)\cdot{}^cT_{\beta_2}(z)$.

\end{thm}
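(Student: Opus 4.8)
The plan is to deduce both identities from the single--variable computations already carried out: the additive formula from Theorem~\ref{crthm}, and the multiplicative one from the ${}^cT$--transform theorem proved above. Rather than recomputing moments from scratch, I would exhibit the combined operator---$\alpha_1+\alpha_2$ for the additive statement, the product $\beta_1\beta_2$ for the multiplicative one---as a configuration satisfying exactly the abstract operator relations on which the relevant single--variable proof rests, but now with polynomial $f_1+f_2$, respectively $F_1\cdot F_2$. Because those proofs are phrased purely in terms of such relations, reproducing them automatically yields the recurrence~(\ref{recg}), respectively the defining relation~(\ref{ctdef}), for the combined operator, and hence the desired equality of transforms.

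For the additive statement I would set $\wb=b_1+b_2$, $\wA=A^\ast_{\eta_1}+A^\ast_{\eta_2}$ and $\wB=A_{\eta_1,{f_1}^\otimes}+A_{\eta_2,{f_2}^\otimes}$, so that $\alpha_1+\alpha_2=\wb+\wA+\wB$, and check that $(\wb,\wA,\wB)$ obeys the relations~(\ref{i}), (\ref{ii}) and~(\ref{iii}). Relation~(\ref{i}) is immediate, since each $b_i$ annihilates $\Omega$, each $A_{\eta_i,m}$ maps into $\mathbb{C}\Omega$, and $\wA(\cE)\perp\Omega$ because $\eta_1,\eta_2\perp\omega_1$. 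Relation~(\ref{ii}) follows by expanding $\wA\wb^q\wA$ and $\wB\wb^q\wA$ into their cross terms $A^\ast_{\eta_i}\wb^qA^\ast_{\eta_j}$ and $A_{\eta_i,m}\wb^qA^\ast_{\eta_j}$ and applying Lemma~\ref{lemma1} to each, using that $\wb^q$ lies in the subalgebra $\pi(\cL(\gH))$. The step that genuinely uses the two vectors is~(\ref{iii}): since $\wA^n\Omega=\sum_{(i_1,\dots,i_n)\in\{1,2\}^n}\omega\otimes\eta_{i_1}\otimes\cdots\otimes\eta_{i_n}$ and, by $\eta_1\perp\eta_2$, the summand $A_{\eta_i,n}$ of $\wB$ detects in this sum only the pure word $\omega\otimes\eta_i^{\otimes n}$, one gets $\wB\wA^n\Omega=(c^{(1)}_n+c^{(2)}_n)\Omega$, where $c^{(i)}_n$ is the coefficient of $z^n$ in $f_i$; this is exactly~(\ref{iii}) for $f=f_1+f_2$. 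Together with $\psi(\wb^q)=\psi((\alpha_1+\alpha_2)^q)$---which holds because $\wA$ and $\wB$ kill $\gH$, just as in the identity $\psi(b^q)=\psi(\alpha^q)$ of Theorem~\ref{crthm}---the proof of Theorem~\ref{crthm} applies verbatim and gives ${}^cR_{\alpha_1+\alpha_2}(z)=z(f_1+f_2)(z)={}^cR_{\alpha_1}(z)+{}^cR_{\alpha_2}(z)$.

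For the multiplicative statement I would follow the same template with $A_i=A^\ast_{\eta_i}$ and $B_i=A_{\eta_i,{F_i}^\otimes}$, expanding $(\beta_1\beta_2)^n$ into a sum of reduced words whose factors alternate between the blocks $\{d_1,d_1A_1,B_1\}$ and $\{d_2,d_2A_2,B_2\}$, and aiming to show that the resulting $\phi$-- and $\psi$--moments satisfy the recurrence characterizing ${}^cT=F_1\cdot F_2$ via~(\ref{ctdef}). The $\psi$--part is easy: since $A_i$ and $B_i$ kill $\gH$, each $\beta_i$ acts there as $d_i$, so $\psi((\beta_1\beta_2)^q)=\psi((d_1d_2)^q)$, which supplies the free backbone of~(\ref{ctdef}). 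The $\phi$--part carries the new content: $B_i(\cE)=\mathbb{C}\Omega$ again lets one split a word at its innermost annihilation factor, while Remark~\ref{remark1} and Lemma~\ref{lemma1} push every $\pi(\cL(\gH))$--factor past the creations at the cost of $\psi$--moments of the $d_i$, which are the ingredients of the coefficients of $F_1$ and $F_2$.

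I expect this last, multiplicative bookkeeping to be the main obstacle. Because $\eta_1\perp\eta_2$, an annihilation indexed by $\eta_1$ can only cancel creations indexed by $\eta_1$: by Remark~\ref{remark2} an $\eta_1$--annihilation applied to a string of creations whose innermost one is indexed by $\eta_2$ vanishes (and symmetrically), and by Remark~\ref{remark1} two consecutive annihilations vanish. I would have to show that these cancellations leave precisely the words in which the $\eta_1$-- and the $\eta_2$--operators are \emph{separately} matched in a nested fashion, so that the surviving sum factors as a convolution of an $F_1$--contribution with an $F_2$--contribution. Establishing that this residual combinatorics is exactly the ${}^cT$--convolution, and correctly accounting for the first--moment normalizations guaranteed by $F_i(0)\neq0$ and $\psi(d_i)\neq0$ that enter~(\ref{ctdef}), is the technical heart of the argument; granting it, one concludes ${}^cT_{\beta_1\beta_2}(z)=F_1(z)F_2(z)={}^cT_{\beta_1}(z)\,{}^cT_{\beta_2}(z)$.
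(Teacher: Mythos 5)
Your additive half is complete and coincides with the paper's own argument: the paper likewise forms $\wb=b_1+b_2$, $\wA=A^\ast_{\eta_1}+A^\ast_{\eta_2}$, $\wB=A_{\eta_1,f_1^\otimes}+A_{\eta_2,f_2^\otimes}$, observes that this triple satisfies conditions (\ref{i})--(\ref{iii}) with $f_1+f_2$ in place of $f$, and then lets the recurrence (\ref{recg}) from the proof of Theorem \ref{crthm} do all the work; your orthogonality computation of $\wB\wA^n\Omega$ is exactly the content of what the paper dismisses as ``trivial verifications''.

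The multiplicative half, however, has a genuine gap, and you concede it yourself: the claim that the cancellations leave only words whose surviving contributions ``factor as a convolution of an $F_1$-contribution with an $F_2$-contribution'' is not a step one may grant --- it \emph{is} the theorem, and everything preceding it in your sketch (splitting at factors with range $\mathbb{C}\Omega$, pushing $\pi(\cL(\gH))$-factors through creations via Lemma \ref{lemma1}, the vanishing rules of Remarks \ref{remark1} and \ref{remark2}) is routine. Moreover your guessed picture, a symmetric ``separately nested matching'' of the $\eta_1$- and $\eta_2$-operators, is not quite what happens. The paper instead expands $\beta_1\beta_2$ into the finite alphabet $\cJ$ of composite letters (after discarding $bB_2=0$ and $B_1dA_2=B_1\psi(d)A_2=0$), throws away words containing the letter $bA_1dA_2$ and words whose leftmost factor is not $B_1d$ or $B_1B_2$, splits off $\phi(\beta^{n-m})$ at the earliest factor of that type, and then classifies the words of $\cJ[n,n]$ by their first-applied letter $x_1\in\{B_1B_2,\ bA_1B_2,\ bdA_2\}$. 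That case analysis exhibits a rigid, \emph{asymmetric} shape forced by $\beta_1$ being the left factor of $\beta_1\beta_2$: in order of application, first a run of letters $bd$, $bdA_2$ closed by the unique letter containing $B_2$, contributing $l_k\psi(\beta^{s_k})\cdots\psi(\beta^{s_1})$, and only then a run of letters $bd$, $bA_1d$ closed by the terminal $B_1d$, contributing $h_p\psi(\beta^{q_p})\cdots\psi(\beta^{q_1})$; the $\eta_2$-block always precedes the $\eta_1$-block, the two being spliced inside the single letter $bA_1B_2$ or $B_1B_2$. Summing these products yields $\sum_{p+k<n}E(p,k)$, which is exactly the convolution structure $g_m=\sum_{p+k=m}l_ph_k$ of the coefficients of $F_1F_2$ required by (\ref{eq401}); this is the content of the paper's Cases I--III and of equations (\ref{eq404}), (\ref{eq411}) and (\ref{eq420}). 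Identifying and proving this word structure is the technical heart that your proposal names but leaves unproven, so as it stands the multiplicative statement is not established.
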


\begin{proof}
 Suppose that $F_1(z)=\sum_{k=0}^M h_k\cdot z^k$ and $F_2(z)=\sum_{k=0}^M l_k\cdot z^k$ (eventually $h_M$ or $l_M$ are zero).

  To prove the first equality, we introduce the notations
 \begin{eqnarray*}
 \wb&=&b_1+b_2\\
 \wA&=&A^\ast_{\eta_1}+A^\ast_{\eta_2}\\
 \wB&=&A_{\eta_1, F_1^\otimes}+A_{\eta_2, F_2^\otimes}
 \end{eqnarray*}

 Trivial verifications show that the triple ($\wb, \wA, \wB$) verify the conditions (\ref{i})--(\ref{iii}), therefore the reccurrence (\ref{recg}) holds true for $\alpha=\wb+\wA+\wB=\alpha_1+\alpha_2$ and $\{g_k\}_{k=1}^M$ the coefficients of $f_1(z)+f_2(z)$, so ${}^cR_{\alpha_1+\alpha_2}(z)=z[f_1(z)+f_2(z)]$, q.e.d..


 For the second equality, we need to prove that $F_1(z)\cdot F_2(z)$ satisfies (\ref{ctdef})  for $\beta=\beta_1\cdot\beta_2$, that is the recurrence formula:
 \begin{eqnarray}
 \phi(\beta^n)&=&\sum_{p=0}^{n-1}\sum_{\substack{q_1,\dots,q_p>0\\q_1+\dots+q_p\leq n}}\phi(\beta^{n-1-(q_1+\dots+q_p)})\cdot
 \Large[g_p\cdot\psi(\beta^{q_p})\cdots\psi(\beta^{q_1})\Large]\nonumber\\
 &=&
 \sum_{m=1}^n\phi(\beta^{n-m})\cdot
 \Big[
 \sum_{p=1}^{m-1}\sum_{\substack{q_1,\dots,q_p>0\\q_1+\dots+q_p<m}}g_p\cdot\psi(\beta^{q_p})\cdots\psi(\beta^{q_1})
 \Big]
 \label{eq401}
 \end{eqnarray}
is verified for $\beta=\beta_1\cdot \beta_2$ and $g_m$ the coefficient of $z^m$ in $F_1(z)\cdot F_2(z)$.

We introduce the notations
 \begin{align*}
 &b=d_1;\  \  \
 d=d_2\\
 &A_1= A^\ast_{\eta_1};\ \ \
 A_2=A^\ast_{\eta_2}\\
 &B_1=A_{\eta_1, F_1^\otimes};\ \ \
 B_2=B_{\eta_2, F_2^\otimes}
 \end{align*}
Then $B_iA_j=0$ whenever $i\neq j$ and $\beta_1=b+bA_1+B_1$, $\beta_2=d+dA_2+B_2$. For $\beta=\beta_1\cdot \beta_2$, we have that
\begin{eqnarray*}
\beta&=&(b+bA_1+B_1)(d+dA_2+B_2)\\
&=&
bd+bdA_2+bB_2+bA_1d+bA_1dA_2+bA_1B_2+B_1d+B_1dA_2+B_1B_2
\end{eqnarray*}
Consider the sets
\begin{align*}
&\cJ=\{bd, bdA_2, bA_1d, bA_1dA_2, bA_1B_2, B_1d, B_1dA_2, B_1B_2\}\\
&\cJ[n,m]=\Big\{ (x_1,\dots,x_n)\in\cJ^n, \min_{k}\{x_k\in\{B_1d,B_1B_2\}\}=m\Big\}\\
&\overline{\cJ}=\cJ\cup\{bA_1dA_2\}.
\end{align*}
Note first that $bB_2=0$; also, since Lemma \ref{lemma1} implies $B_1dA_2=B_1\psi(d)A_2=0$ we have that $\beta=\sum_{x\in\overline{\cJ}}x$, hence
\[
\phi(\beta^n)=\sum_{(x_1,\dots,x_n)\in\overline{\cJ}}\phi(x_n\dots x_1)
\]
If some $x_k$ is $bA_1dA_2$, then $\phi(x_n\cdots x_1)$ cancels, since the vectors from $\cE$ with the $\cT(\cK)$ component containing mixed tensors in $\eta_1$ and $\eta_2$ are cancelled by any $B_i$ ($i=1,2$) and are also orthogonal to $\Omega$. On the other hand,  $\phi(x_n\cdots x_1)$ also cancels if $x_n\cdots x_1\perp\Omega$, that is if $x_n$ does not start with some $B_i$. It follows that only terms having $x_n\in\{B_1d,B_1B_2\}$ contribute to the sum, that is the sum can be taken only for $(x_1,\dots,x_n)\in\cJ[n,m]$, ($1\leq m\leq n$).

Consider now $(x_1,\dots, x_n)\in\cJ[n,m]$. Then $x_m\cdots x_1\Omega=\phi(x_m\cdots x_1)\Omega$, hence $\phi(x_n\cdots x_1)=\phi(x_n\cdots x_m)\phi(x_m\cdots x_1)$, and
\begin{eqnarray}
\sum_{(x_1,\dots,x_n)\in\cJ[n,m]}\phi(x_n\dots x_1)
&=&
\sum_{(x_1,\dots,x_n)\in\cJ[n,m]}\phi(x_n\cdots x_{m+1})\cdot\phi(x_m\cdots x_1)\nonumber\\
&=&\phi(\beta^{n-m})\cdot\Big[\sum_{(x_1,\dots, x_m)\in\cJ[m,m]}\phi(x_m\cdots x_1)\Big]\label{eq402},
\end{eqnarray}
therefore it suffices to prove that the second factors from the right hand sides of (\ref{eq402}) coincides to the second factor of the $m$-th summand in (\ref{eq401}), that is (we use that $g_m=\sum_{p+k=m}l_p\cdot h_k$):
\begin{eqnarray}
\sum_{(x_1,\dots, x_n)\in\cJ[n,n]}\phi(x_n\cdots x_1)
&=&
\sum_{m=0}^{n-1}g_m\cdot\sum_{\substack{q_1,\dots,q_m>0\\q_1+\dots+q_m<n}}\psi(\beta^{q_m})\cdots\psi(\beta^{q_1})\nonumber\\
&=&
\sum_{\substack{p,k\geq0\\p+q<n}}E(p,k)\label{eq404}
\end{eqnarray}
for $E(p,k)=[l_p\psi(\beta^{q_m})\cdots\psi(\beta^{q_1})]\cdot[ h_k\psi(\beta^{s_k})\cdots\psi(\beta^{s_1})$ where the  summation is done over al $p,q\geq0$ such that $p+q<n$ and all $q_1,\dots,q_p, s_1,\dots s_k>0$ with $q_1+\dots q_p+s_1+\dots+s_k=n-1$.

 Consider $(x_1,\dots,x_n)\in\cJ[n,n]$ such that $\phi(x_n\cdots x_1)\neq0$; then $x_1\Omega\neq0$, so $x_1\in\{B_1B_2, bA_1B_2, bdA_2\}$.

Case I: $x_1=B_2B_2$; this imply that $n=1$ (since $x_1$ already starts with a $B_i$) and $\phi(x_1)=\langle B_1B_2\Omega, \Omega\rangle=l_0h_0$, that is (\ref{eq404}) for $n=1$.

Case II: $x_1=bA_1B_2$.

\noindent In this case $x_1\Omega=bA_1B_2\Omega=bA_1\Omega\cdot l_0$.

Let $j=\min\{k:\  x_k\ \text{contains}\ B_1,\ \text{i. e.}\ x_k\in\{B_1B_2, B_1d\}\}$ (since $x_n\in\{B_1B_2, B_1d\}$, the set is not void). Also, $j\neq n$ will contradict the definition of $\cJ[n,n]$, so $j=n$.

Tensors containing $\eta_1$ are canceled by $B_2$ and, as seen earlier, summands with $A_2$ and $A_1$ not separated by $B_1$ do not contribute to the sum, therefore it follows that $x_2, \dots, x_{n-1}$ do not contain $A_2$ nor $B_2$, so they can be only of the types $bd$ and $bA_1d$.

Let $1<k_2<\dots<k_p<n$ be the indices of the factors of type $bA_1d$ and put $k_1=1$ and $k_{p+1}=n$. For $q_i=k_{i+1}-k_{i}$, applying Lemma \ref{lemma1}, we have
\begin{eqnarray}
x_n\cdots x_2bA_1\Omega
&=&
B_1d(bd)^{q_p-1}bA_1d(bd)^{q_{p-1}-1}\cdots(bd)^{q_1-1}bA_1\Omega\nonumber\\
&=&
B_1\psi((db)^{q_p})
A_1\psi((db)^{q_{p-1}})\cdots \psi((db)^{q_1})A_1
\Omega\nonumber\\
&=&
(B_1A^p_1\Omega)\psi((db)^{q_p})\cdots\psi((db)^{q_1})\nonumber\\
&=&h_p\Omega\cdot\psi(\beta^{q_p})\cdots\psi(\beta^{q_1})\label{setting}
\end{eqnarray}
since $B_1A^p_1\Omega=h_p\Omega$ and  $\psi(db)=\psi(bd)=\psi(\beta)$ due to the traciality of the vector states. Multiplying with $l_0$ and summing, we obtain
\begin{equation}\label{eq411}
\sum_{\substack{(x_1,\dots, x_n)\cJ[n,n]\\x_1=dA_1B_2}}\phi(x_n\cdots x_1)
=\sum_{p=1}^{n-1}
E(p,0)
\end{equation}

Case III: $x_1=bdA_2$.

 Let $x_j$ be the factor of the smallest index that contains $B_2$. Since $x_j\in\cJ$, we have that $x_j=y\cdot B_2$, with $y\in\{bA_1, B_1\}$.

 None of the factors $x_2, \dots ,x_{j-1}$ contains $B_1$ (otherwise it will contradict the definition of $J[n,n])$; if some of them will contain $A_1$, then $yB_2x_{j-1}\cdots x_2=0$, since $B_2$ cancels all the tensors mixing $\eta_1$ and $\eta_2$. Hence $x_2, \dots, x_{j-1}\in\{bd, bdA_2\}$. Again, let $1=j_1<\dots<j_k<n$ be the indices of the factors of type $bdA_2$ and put $j_{k+1}=j$. For $s_i=k_{i+1}-k_{i}$, applying Lemma \ref{lemma1}, we have
 \begin{eqnarray}
 x_j\cdots x_1\Omega&=&y\cdot B_2(bd)^{s_k}A_2(bd)^{s_{k-1}}A_2\cdots (bd)^{s_1}A_2\Omega\nonumber\\
 &=&y\Omega\cdot l_k\psi(\beta)^{s_k}\cdots\psi(\beta^{s_1})\label{eq412}
 \end{eqnarray}
(we used that $B_2A_2^k\Omega=l_k\Omega$ and that $\psi(\beta^s)=\psi((bd)^s)$.

If $y=B_1$ then $y\Omega=h_0$; also, the minimality of $n$ implies $j=n$.

If $y=bA_1$, since $x_n\in\{ B_1B_2, B_1d\}$, the minimality of $n$ implies that $x_{j+1}, \dots, x_{n-1}$ do not contain $B_1$. If they will contain $A_2$ or $B_2$, then $\phi(x_n\cdots x_1)=0$ as seen earlier, so they must be of the types $bd$ or $bA_1d$. Also, if $x_n=B_1B_2$, then again $\phi(x_n\cdots x_1)=0$, so we can suppose $x_n=B_1d$.  In this case, $x_n\cdots x_{j=1}y\Omega$ is in the setting of formula (\ref{setting}), so it is computed accordingly to it.

Summing, we obtain
\begin{equation}\label{eq420}
\sum_{\substack{(x_1, \dots, x_n)\in\cJ[n,n]\\ x_1=bA_2}}\phi(x_n\cdots x_1)=\sum_{\substack{p, k-1>0\\p+k<n}}E(p,k)
\end{equation}
and the conclusion follows, since (\ref{eq420}) and (\ref{eq411}) imply (\ref{eq404}).


\end{proof}

\begin{cor} Let $\mathcal{A}$ be a unital algebra, $\Phi, \Psi:\cA\lra\mathbb{C}$ be two linear maps with $\Phi(1)=\Psi(1)=1$ and let $X,Y$ be two c-free (with respect to the maps $\Phi$, $\Psi$) elements from $\cA$.
\begin{enumerate}
\item[(i)]$R_{X+Y}=R_X+R_X$ and ${}^cR_{X=Y}={}^cR_X+{}^cR_Y$ as formal power series.
\item[(ii)]If $\Psi(X), \Psi(Y)$ are nonzero, then $T_{XY}=T_{X}\cdot T_{Y}$ and ${}^cT_{XY}={}^cT_X\cdot{}^cT_{Y}$ as formal power series.
\end{enumerate}
\end{cor}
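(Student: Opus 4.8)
The plan is to transport the identities already established in Theorem \ref{mainthm} for the concrete operators of Section 3 to the abstract algebra $\cA$, exploiting the fact that all four transforms are functions of the individual distribution alone. First I would record the observation, immediate from the defining equations (\ref{rdef}), (\ref{crdef}) and (\ref{ctdef}), that ${}^cR_X$, ${}^cT_X$, $R_X$ and $T_X$ depend only on the two moment series $m_X(z)=\sum_{k\geq1}\Psi(X^k)z^k$ and $M_X(z)=\sum_{k\geq1}\Phi(X^k)z^k$; conversely the pair $(m_X,{}^cR_X)$ recovers $M_X$ through (\ref{crdef}), and $(m_X,{}^cT_X)$ recovers $M_X$ through (\ref{ctdef}). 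Thus specifying the $(\Phi,\Psi)$-distribution of $X$ is the same as specifying $(m_X,{}^cR_X)$ (or, when $\Psi(X)\neq0$, the pair $(m_X,{}^cT_X)$).

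Next I would realize $X$ and $Y$ by operators of Section 3 on a common space $\cE(\gH,\cK)$ built as in Corollary \ref{cor1}. Fix orthogonal unit vectors $\eta_1,\eta_2\in\cK$ and $e_1,e_2\in\cH$. For the additive statement, I would pick $b_i\in\pi(\cA(e_i))$ realizing the $\Psi$-moments of $X$ (resp.\ $Y$) via Theorem \ref{freeprob}(2), so that $\psi(b_1^{\,q})=\Psi(X^q)$ for all $q$, together with polynomials $f_i$ satisfying $zf_i(z)={}^cR_X(z)$ (resp.\ ${}^cR_Y(z)$). Then $\alpha_1=b_1+A^\ast_{\eta_1}+A_{\eta_1,f_1^\otimes}$ has, by Theorem \ref{crthm}, the same ${}^cR$-transform as $X$ and, since $\psi(\alpha_1^{\,q})=\psi(b_1^{\,q})$, the same $\psi$-moments; by the previous paragraph it therefore has the same $(\phi,\psi)$-distribution as $X$, and likewise $\alpha_2$ reproduces $Y$. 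The multiplicative statement is handled identically with the operators $\beta_i$ and the ${}^cT$-theorem of Section 3, matching the pair $(m,{}^cT)$. One technical point is that ${}^cR_X$ need not be a polynomial: since the recurrence (\ref{recg}) shows the $n$-th moment of $\alpha$ to depend only on the coefficients of $f$ up to degree $n-1$, I would argue order by order, matching distributions up to order $n$ with polynomial truncations and then passing to the formal-power-series limit. By Corollary \ref{cor1} the pair $(\alpha_1,\alpha_2)$ (resp.\ $(\beta_1,\beta_2)$) is c-free with respect to $(\phi,\psi)$.

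Finally I would invoke the universality of c-free convolution: since $(X,Y)$ and $(\alpha_1,\alpha_2)$ are each c-free and share the same marginal $(\Phi,\Psi)$- and $(\phi,\psi)$-distributions, every mixed moment of $X,Y$ under $\Phi$ (resp.\ $\Psi$) equals the corresponding mixed moment of $\alpha_1,\alpha_2$ under $\phi$ (resp.\ $\psi$). This holds because the defining relations of c-freeness, combined with the centering trick $x=\varphi(x)+(x-\varphi(x))$ already used in Lemma \ref{lemma2}, reduce every mixed moment to a universal polynomial in the marginal moments — first the $\psi$-moments via freeness, then the $\phi$-moments via the c-free factorization. Granting this, $X+Y$ and $\alpha_1+\alpha_2$ have identical $\phi$- and $\psi$-moments, so that
\[
{}^cR_{X+Y}={}^cR_{\alpha_1+\alpha_2}={}^cR_{\alpha_1}+{}^cR_{\alpha_2}={}^cR_X+{}^cR_Y
\]
by Theorem \ref{mainthm}; the ordinary identity $R_{X+Y}=R_X+R_Y$ comes out of the same chain using Theorem \ref{freeprob}(2), and the multiplicative identities for $T$ and ${}^cT$ follow word for word from $XY\sim\beta_1\beta_2$ together with Theorem \ref{freeprob}(3). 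The main obstacle is precisely this universality step: one must confirm that c-freeness genuinely determines the joint distribution from the marginals, so that the concrete Fock-space model faithfully computes the abstract c-free convolution. Everything else is bookkeeping with formal power series and direct citation of the operator theorems.
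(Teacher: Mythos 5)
Your proposal is correct and takes essentially the same route as the paper's own proof: realize $X$ and $Y$ by the Section 3 operators inside the c-free pair of algebras from Corollary \ref{cor1}, match distributions up to an arbitrary order $N$ via polynomial truncations of the transforms, use the fact that c-freeness determines mixed moments from the marginal ones, and conclude with Theorem \ref{mainthm} (and Theorems \ref{freeprob}, \ref{crthm}). The universality step you flag as the main obstacle is precisely the fact the paper also relies on (stated there as a note without proof), and your centering-trick justification of it is the standard correct argument.
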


\begin{proof}

 The equalities for $R$- and $T$-transforms are basic properities in the Free Probability Theory (see \cite{dnv}, \cite{speichernica}). We need to prove (i) and (ii) for the ${}^cR$- and ${}^cT$-transforms.

 As in Corollary \ref{cor1}, we will consider two complex Hilbert spaces $\cH$ and $\cK$ of dimension at least two and $\cE=\cT(\cH)\oplus[\cT(\cH)\otimes\cT(\cK)]$, where
 \begin{align*}
 &\cT(\cH)=\mathbb{C}\omega\oplus\cH\oplus(\cH\otimes\cH)\oplus\dots\\
 &\cT(\cK)=\mathbb{C}\omega\oplus\cK\oplus(\cK\otimes\cK)\oplus\dots
 \end{align*}
We fix  $e_1, e_2$, respectively $\eta_1, \eta_2$ two pairs of orthogonal unit vectors from $\cH$, respectively $\cK$. From Corollary \ref{cor1}, the algebras $\gA_1=\cD(\eta_1)\vee\pi(\cA(e_1))$ and $\gA_2=\cD(\eta_2)\vee\pi(\cA(e_2))$ are c-free with respect to $\phi(\cdot)=\langle\cdot\omega\otimes\omega, \omega\otimes\omega\rangle$ and $\psi(\cdot)=\langle\cdot\omega, \omega\rangle$.

Also note that, from the relations defining the free, respectively c-free independence (see Section 2, \S1), the moments up to order $N$ of $X+Y$ and $XY$ with respect to $\Phi$ and $\Psi$ are uniquelly determined by the moments of order up to $N$ of $X$ and $Y$.

For (i), consider $f_1(z)$, $F_1(z)$, repectively $f_2(z)$, $F_2(z)$ be the polynomials obtained by the trucation of order $N$ of $R_X$, ${}^cR_X$, respectively $R_Y$, ${}^cR_Y$ (i. e. if ${}cR_X(z)=\sum_{k=1}^\infty l_k\cdot z^k$, then $F_1(z)=\sum_{k=1}^N l_k\cdot z^k$ and the analogues).

 With the notations from Section 2, take ($i=1,2$)
 \[
 \alpha_i=\pi(a^\ast_{e_i}+f_i(a_{e_i}))+A^\ast_{\eta_i}+A_{\eta_i,F_i^\otimes}
 \]
 We have that $\alpha_i\in\gA_i$, so $\alpha_1, \alpha_2$ are c-free with respect to $\phi$ and $\psi$, hence, from Theorem \ref{mainthm}, 
 \begin{align}
 &{}^cR_{\alpha_1+\alpha_2}(z)= {}^cR_{\alpha_1}(z)+ {}^cR_{\alpha_2}(z)\label{creq}
 \end{align}
 From Theorem \ref{freeprob}(2) and Theorem \ref{crthm}, we have that
 $R_{\alpha_i}(z)=f_i(z)$
  and  
 ${}^cR_{\alpha_i}(z)= F_i(z)$, 
 therefore $R_{\alpha_1}$, ${}^cR_{\alpha_1}$, and $R_{X}$, ${}^cR_{X}$ coincide up to order $N$. Then equations (\ref{rdef}) and (\ref{crdef}) imply  that the moments up to order $N$ of $X$ with respect to $\Phi$, respectively $\Psi$, coincide to the moments up to order $N$ of $\alpha_1$ with respect to $\phi$ and $\psi$. The same holds true for $Y$ and $\alpha_2$, therefore the moments up to order $N$ of $X+Y$ and $\alpha_1+\alpha_2$ do coincide. Henceforth, from equation (\ref{crdef}), the first $N$ coefficients of ${}^cR_{\alpha_1+\alpha_2}$ and ${}^cR_{X+Y}$ do coincide. Since $N$ is arbitrary, equation (\ref{creq}) gives the conclusion.
 
 The proof for (ii) is similar, taking ($i=1,2$)
 \[
 \beta_i=\pi\Big((\Id+a^\ast_{e_i})f_i(a_{e_i})\Big)+\pi\Big((\Id+a^\ast_{e_i})f_i(a_{e_i})\Big)A^\ast_{\eta_i}+A_{\eta_i,F_i^\otimes}
 \]
 where $f_1, F_1$, respectively $f_2, F_2$ are now the polynomials given by the truncation of order $N$ of $T_X$ and ${}^cT_X$, respectively $T_Y$ and ${}^cT_Y$.
\end{proof}

\end{document}